\documentclass[reqno,11pt, a4] {amsart}
\usepackage[active]{srcltx}
\usepackage[poly,curve,line, frame, all, arc]{xy}
\usepackage{pb-diagram}
\usepackage{epstopdf}
\usepackage{pdfsync}

\usepackage{enumitem}
\usepackage{mathrsfs}
\usepackage{amsmath}
\usepackage{amssymb}
\usepackage{amscd}
\usepackage{amsthm}
\usepackage[latin1]{inputenc}
\usepackage{graphics}
\usepackage{graphicx}
\usepackage{varioref}

\labelformat{enumi}{(#1)}

\newtheorem{teo}{Theorem}[section]
\newtheorem{defin}{Definition}[section]
\newtheorem{remark}{Remark}[section]
\newtheorem{prop}{Proposition}[section]
\newtheorem{cor}{Corollary}[section]
\newtheorem{lemma}{Lemma}[section]

\newtheoremstyle{dico}
 {\baselineskip}   
  {\topsep}   
  {}  
  {0pt}       
  {} 
  {.}         
  {5pt plus 1pt minus 1pt} 
  {}          
\theoremstyle{dico}
\newtheorem{say}[equation]{}
\numberwithin{equation}{section}

\newcommand{\lds}{\ldots}
\newcommand{\cds}{\cdots}
\newcommand{\cd}{\cdot}

\newcommand{\om}{\omega}

\renewcommand{\phi}{\varphi}

\newcommand{\ra}{\rightarrow}
\newcommand{\lra}{\longrightarrow}
\newcommand{\C}{\mathbb{C}}
\newcommand{\R}{\mathbb{R}}

\newcommand{\su} {\mathfrak{su}}

\newcommand{\restr}[1]          {\vert_{#1}}

\newcommand{\meno}{^{-1}}

\newcommand{\PP}{\mathbb{P}}

\newcommand{\conv} {\operatorname{conv}}
\newcommand{\liu}{\mathfrak{u}}

\newcommand{\lia}{\mathfrak{a}}

\newcommand{\lieg}{\mathfrak{g}}

\newcommand{\lieb}{\mathfrak{b}}
\newcommand{\liep}{\mathfrak{p}}

\newcommand{\liet}{\mathfrak{t}}

\newcommand{\vacuo}{\emptyset}
\newcommand{\sx}{\langle}                   
\newcommand{\xs}{\rangle}



\newcommand{\scalo}{\sx \, , \, \xs}
\newcommand{\noparty}[1]{}
%

%




%
%

\newcommand{\mup}{\mu_\liep}
\newcommand{\mua}{\mu_\lia}
\newcommand{\mupb}{\mu_\liep^\beta}

\newcommand{\fun}{\mathfrak{F}}

\newcommand{\spaz}{\mathscr{M}}


\newcommand{\desudt}[1][]{\dfrac {\mathrm {d} #1 }{\mathrm {dt}}}
\newcommand{\desudtzero}{\desudt \bigg \vert _{t=0} }

\newcommand{\proba}{\mathscr{P}}
\newcommand{\pb}{\proba(X)}

\newcommand{\PsiM}{\Psi^\proba}

\newcommand{\PC}{\PP^n(\C)}
\newcommand{\wm}{W_{\max}}

\begin{document}
\title{Remarks on the abelian convexity theorem}
\author{Leonardo Biliotti}
\author{Alessandro Ghigi}

\subjclass[2000]{22E46; 
  53D20 
}
\begin{abstract}
  This note contains some observations on abelian convexity theorems.
  Convexity along an orbit is established in a very general setting
  using Kempf-Ness functions.  This is applied to give short proofs of
  the Atiyah-Guillemin-Sternberg theorem and of abelian convexity for
  the gradient map in the case of a real analytic submanifold of
  complex projective space. Finally we give an application to the
  action on the probability measures.
\end{abstract}

\address{Universit\`{a} degli Studi di Parma} \email{leonardo.biliotti@unipr.it}
\address{Universit\`a degli Studi di Pavia}\email{alessandro.ghigi@unipv.it}

\thanks{The authors were partially supported by FIRB 2012 ``Geometria
  differenziale e teoria geometrica delle funzioni'' and by GNSAGA of
  INdAM.  The first author was also supported by MIUR PRIN 2015 ``Real
  and Complex Manifolds: Geometry, Topology and Harmonic Analysis''.
  The second author was also supported by MIUR PRIN 2015 ``Moduli
  spaces and Lie theory''. }

\keywords{K\"ahler manifolds; moment maps; geometric invariant theory;
  probability measures.}

\subjclass[2010] {Primary 53D20; Secondary 32M05, 14L24}

\maketitle

\section{Introduction}
\label{sec:introduction}

\begin{say}
  \label{caso-complesso}
  Let $U$ be a compact connected Lie group and let $U^\C$ be its
  complexification.  Let $(Z,\omega)$ be a K\"ahler manifold on which
  $U^\C$ acts holomorphically. Assume that $U$ acts in a Hamiltonian
  fashion with momentum map $\mu:Z \lra\liu^*$.  This means that $\om$
  is $U$-invariant, $\mu$ is equivariant and for any $\beta \in \liu$
  we have
  \begin{gather*}
    d\mu^\beta = i_{\beta_Z} \om,
  \end{gather*}
  where $\mu^\beta = \sx \mu, \beta \xs$ and $\beta_Z$ denotes the
  fundamental vector field on $Z$ induced by the action of $U$.  It is
  well-known that the momentum map represents a fundamental tool in
  the study of the action of $U^\C$ on $Z$. Of particular importance
  are convexity theorems \cite{atiyah-commuting,
    guillemin-sternberg-convexity-1,kirwan}, which depend on the fact
  that the functions $\mu^\beta$ are Morse-Bott with even indices.
\end{say}

\begin{say}\label{gradient-cl}
  More recently the momentum map has been generalized to the following
  setting
  \cite{heinzner-schwarz,heinzner-schwarz-stoetzel,heinzner-stoetzel,heinzner-schuetzdeller}.
  Let $G\subset U^\C$ be a closed connected subgroup of $U^\C$ that is
  \emph{compatible} with respect to the Cartan decomposition of
  $U^\C$.  This means that $G$ is a closed subgroup of $U^\C$ such
  that $G=K\exp (\liep)$, where $K=U\cap G$ and
  $\liep=\lieg \cap i\liu$
  \cite{heinzner-schwarz-stoetzel,heinzner-stoetzel-global}.  The
  inclusion $i \liep \hookrightarrow \liu$ induces by restriction a
  $K$-equivariant map $\mu_{i \liep}:Z \lra (i \liep)^*$.  Using a
  fixed $U$-invariant scalar product $\scalo$ on $\liu$, we identify
  $\liu \cong \liu^*$.  We also denote by $\scalo$ the scalar product
  on $i\liu$ such that multiplication by $i$ is an isometry of $\liu$
  onto $i\liu$.  For $z \in Z$ let $\mup (z) \in \liep$ denote $-i$
  times the component of $\mu(z)$ in the direction of $i\liep$.  In
  other words we require that
  \begin{gather}
    \label{mup}
    \sx \mup (z) , \beta \xs = - \sx \mu(z) , i\beta\xs,
  \end{gather}
  for any $\beta \in \liep$. The map $ \mu_\liep : Z \ra \liep $ is
  called the $G$-\emph{gradient map}.  Given a compact $G$-stable
  subset $X \subset Z$ we consider the restriction
  $\mup:X \lra \liep$.  We also set
  \begin{gather*}
    \mupb:= \sx \mup, \beta \xs = \mu^{-i\beta}.
  \end{gather*}
  Many fundamental theorems regarding the momentum map hold also for
  the gradient map.  The functions $\mup^\beta$ are Morse-Bott,
  although in general not with even indices.  Even so in
  \cite{heinzner-schuetzdeller} (see also \cite{bghc}) the authors
  prove the following convexity theorem: let $V$ be a unitary
  representation of $U$ and let $Y \subset \mathbb {P}(V)$ be a closed
  real semi-algebraic subset, whose real algebraic Zariski closure is
  irreducible.  If $\mathfrak a$ is a maximal abelian subalgebra of
  $\lieg$ contained in $\liep$ and $\mathfrak a_{+}$ is a positive
  Weyl chamber, then $\mup(Y) \cap \mathfrak a_{+}$ is a convex
  polytope. The proof is rather delicate.

  One of the goals of the present note is to give a convexity theorem
  along an orbit, i.e. to show that the image of an orbit via the
  gradient map is convex. This will be proved in a very general
  setting using only so-called Kempf-Ness functions.  This allows us
  to prove the corresponding theorem for the gradient map without
  using results from the complex case. As applications we get a simple
  proof of the abelian convexity theorem for the gradient map for real
  analytic submanifolds and the convexity along an orbit for the
  gradient map associated to the induced action on probability
  measures.
\end{say}

\begin{say}\label{abst}
Using the same notation as above, assume that $X$ is a
    compact $G$-invariant submanifold of $Z$.  In \cite{bgs,bz} the
    authors and Zedda studied the action of $G$ on the set of
    probability measures on $X$.  This set is not a manifold, but
  many features of the action, especially those relating only to a
  single orbit closure, can be studied with a formalism very similar
  to the momentum map. We now recall this formalism.

  Let $\spaz$ be a Hausdorff topological space and let $G$ be a
  non-compact real reductive group which acts continuously on $\spaz$.
  We can write $G=K\exp (\liep)$, where $K$ is a maximal compact
  subgroup of $G$.  Given a function $ \Psi : \spaz \times G \ra \R$,
  consider the following properties.
  \begin{enumerate}[label=(P\arabic*),ref=P\arabic*]
  \item \label{P1} For any $x\in \spaz$ the function $ \Psi(x,\cd )$
    is smooth on $G$.
  \item \label{P2}The function $\Psi(x, \cd )$ is left--invariant with
    respect to $K$, i.e.: $\Psi(x,kg) = \Psi(x,g)$.
  \item \label{P3}For any $x\in \spaz$, and any $\xi \in \liep$ and
    $t\in \R${:}
    \begin{gather*}
      \frac{\mathrm{d^2}}{\mathrm{dt}^2 } \Psi(x,\exp(t\xi)) \geq 0.
    \end{gather*}
    Moreover:
    \begin{gather*}
      \frac{\mathrm{d^2}}{\mathrm{dt}^2 }\bigg \vert_{t=0}
      \Psi(x,\exp(t\xi)) = 0
    \end{gather*}
    if and only if $\exp(\R \xi) \subset G_x$.
  \item For any $x\in \spaz$, and any $g, h\in G$:
    \begin{gather*}
      \Psi(x,g) + \Psi({gx}, h) = \Psi(x,hg).
    \end{gather*}
    This equation is called the \emph{cocycle condition}.
  \end{enumerate}
  In order to state our fifth condition, let
  $\scalo : \liep^*\times \liep \ra \R$ be the duality pairing.  For
  $x\in \spaz$ define $\fun(x) \in \liep^*$ by requiring that:
  \begin{equation}
    \label{momento-astratto}
    \sx \fun (x), \xi \xs =   \desudtzero \Psi(x,
    \exp(t\xi)) .
  \end{equation}
  \begin{enumerate}[label=(P\arabic*),ref=P\arabic*]
    \setcounter{enumi}{4}
  \item \label{P5} The map $\fun : \spaz \ra \liep^*$ is continuous.
  \end{enumerate}
\end{say}
\begin{defin}
  \label{def-kn}
  Let $G$ be a non-compact real reductive Lie group, $K$ a maximal
  compact subgroup of $G$ and $\spaz$ a Hausdorff topological space
  with a continuous $G$--action. A \emph{Kempf-Ness function} for
  $(\spaz, G,K)$ is a function
  \begin{gather*}
    \Psi : \spaz \times G \ra \R ,
  \end{gather*}
  that satisfies conditions \ref{P1}--\ref{P5}.  The map $\fun$ is
  called the \emph{gradient map} of $(\spaz, G, K, \Psi).$
\end{defin}
By \cite[Prop. 5]{bz} $\fun : \spaz \ra \liep^*$ is a $K$-equivariant
map.  Since $K$ is compact, we may fix a $\mathrm{K}$-invariant scalar
product $\langle \cdot,\cdot \rangle$ of $\liep$ and we may identify
$\liep^* \cong \liep$ by means of $\langle \cdot,\cdot \rangle$.
Hence we may think the gradient map as a $\liep$-valued map
$\fun:\spaz \ra \liep$.

\begin{remark}
  In \cite{bgs,bz} a sixth hypothesis is assumed, which is necessary
  to define the maximal weight and to deal with stability issues. This
  hypothesis is not needed for the arguments of the present paper.
\end{remark}

\begin{say}
The original setting \cite{kempf-ness} for what we call
    Kempf-Ness function is the following: let $V$ be a unitary
    representation of $U$.  For $x=[v] \in \PP(V)$ and $g\in U^\C$ set
    $\Psi(x,g):= \log ( |g\meno v | / |v|)$. This function satisfies
    \ref{P1}--\ref{P5} with $\fun=\mu$, the momentum map.
    Thus the behaviour of the momentum map is encoded in the function
    $\Psi$.  Functions similar to these ones exist for rather general
    actions.  The following result has been proven in \cite[\S
    2]{hhinv}, \cite{azad-loeb-bulletin}, \cite{mundet-Crelles} for
    $G=U^\C$ and in \cite[\S 7]{bz} in the general case.
\end{say}
\begin{prop}\label{Kempf-Ness-gradient-peter}
  Let $X,G,K$ be as in \ref{gradient-cl}. Then there exists a
  Kempf-Ness function $\Psi$ for $(X, G, K)$ satisfying the conditions
  $(P1)-(P5)$ such that $\fun=\mup$.
\end{prop}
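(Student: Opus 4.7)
The plan is to reduce the problem to the classical case $G = U^\C$, $K = U$ and to obtain the Kempf-Ness function for $(X,G,K)$ by restriction. The existence of a function $\tilde\Psi : Z \times U^\C \ra \R$ satisfying the analogues of \ref{P1}--\ref{P5} with gradient map equal to the full momentum map $\mu$ is the classical theory, worked out in \cite{hhinv}, \cite{azad-loeb-bulletin}, \cite{mundet-Crelles}; it uses the \Keler structure substantially, since the construction amounts to integrating a closed $1$-form on $Z \times U^\C$ built from $\mu$ and $\om$, whose closedness is a consequence of $d\mu^\beta = i_{\beta_Z}\om$ together with $d\om = 0$. Taking this function as input, I would set $\Psi(x,g) := \tilde\Psi(x,g)$ for $x \in X$ and $g \in G$ and verify the five axioms.

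Four of the five axioms transfer almost trivially. \ref{P1} follows from the fact that $G \hookrightarrow U^\C$ is a smooth closed subgroup; \ref{P2} follows from $K = U \cap G \subset U$ and left $U$-invariance of $\tilde\Psi$; \ref{P4} restricts unchanged from $U^\C \times U^\C$ to $G \times G$. For \ref{P3} the point is that, for $\xi \in \liep \subset i\liu$, the one-parameter subgroup $\exp(t\xi)$ lies in both $G$ and $U^\C$, so convexity of $t \mapsto \Psi(x,\exp(t\xi))$ reduces to the $U^\C$ statement. The equality clause requires one additional observation: vanishing of the second derivative of $\tilde\Psi(x,\exp(t\xi))$ is equivalent to $\exp(\R\xi) \subset (U^\C)_x$; since $\xi \in \lieg$, the subgroup $\exp(\R\xi)$ already lies in $G$, hence in $(U^\C)_x \cap G = G_x$.

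The substantive verification is \ref{P5}. By construction and \eqref{momento-astratto},
$$\sx \fun(x), \xi \xs = \frac{d}{dt}\bigg|_{t=0}\tilde\Psi(x, \exp(t\xi))$$
for $\xi \in \liep$. Applying the defining property of $\tilde\Psi$ as a Kempf-Ness function for $U^\C$ gives that the right-hand side equals $-\sx \mu(x), i\xi\xs$; this is essentially the same calculation that produces the identity \eqref{mup} in the $U^\C$ case, together with left $U$-invariance which forces the derivative to depend only on the $i\liu$-component. Comparing with \eqref{mup} applied to the subgroup $G$, namely $\sx \mup(x), \xi\xs = -\sx \mu(x), i\xi\xs$, one concludes $\fun = \mup$ on $X$.

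The main obstacle is really the classical construction of $\tilde\Psi$ itself, i.e.\ the closedness and integrability of the $1$-form mentioned above, which uses the full \Keler structure of $Z$. Once this is in hand, the restriction procedure is essentially bookkeeping that exploits the compatibility $G = K \exp(\liep)$ with the Cartan decomposition $U^\C = U \exp(i\liu)$ of the ambient complexification; in particular the sign and $i$-factor in \eqref{mup} are precisely what is needed to match the derivative coming from the $U^\C$ Kempf-Ness function.
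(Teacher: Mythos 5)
The paper gives no proof of this proposition: it is quoted from \cite[\S 2]{hhinv}, \cite{azad-loeb-bulletin}, \cite{mundet-Crelles} for $G=U^\C$ and from \cite[\S 7]{bz} for a general compatible subgroup, and the general case is obtained there exactly as you propose, by restricting the $U^\C$ Kempf--Ness function to $X\times G$, using $\liep\subset i\liu$ for (P3) together with $G_x=(U^\C)_x\cap G$, and matching the $t$-derivative against \eqref{mup} to get $\fun=\mup$. Your verification of (P1)--(P5) is correct, so the proposal follows essentially the same route as the paper's cited argument.
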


\begin{say}
  In the present note we study abelian convexity theorems. In \S
  \ref{sec:abelian-convexity} we give an easy proof of convexity for
  the image of an orbit of an abelian group in the setting of
  Kempf-Ness functions, see Theorem \ref{conv2}. In \S 3 we apply this
  to the setting of the gradient map as in \ref{gradient-cl}.

  If $G=A=\exp (\lia)$, where $\lia \subset \liep$ is an abelian
  subalgebra, we are able to prove that the image of the gradient map
  of an $A$-orbit, is convex (Theorem \ref{conv-orbite}) without using
  the convexity results available in the complex setting (see
  \cite[p. 5]{heinzner-stoetzel}). Our proof only uses the existence
  of Kempf-Ness functions.

  We also give a new proof of the Atiyah-Guillemin-Sternberg convexity
  theorem.  Indeed consider the case where $X=Z$ is compact, $T$ is a
  compact torus and $G=T^\C$.  Atiyah \cite{atiyah-commuting}
  suggested that the convexity of $\mu (T^\C\cd p)$ (for $p\in Z)$
  could be used to give an alternative proof of the abelian convexity
  theorem showing that there always exists $p\in Z$ such that
  $ \overline{\mu(T^\C\cd p) } = \mu(Z)$.  Duistermaat
  \cite{duistermaat} proved that the set of points $p$ with
  $ \overline{\mu(T^\C\cd p) } = \mu(Z)$ is non-empty and dense (see
  also \cite{bloch-ratiu}) We give a new proof of this result and we
  also show that this set is open.  More importantly, we believe that
  the abstract approach that we follow adds to the understanding of
  some basic results in the subject.

In the case of real analytic submanifolds of $\PP^n(\C)$ our method
  yields the following.
\end{say}
\begin{teo}
    Let $X \subset \PC$ be a compact connected real analytic
    submanifold that is invariant by $A = \exp (\lia)$ where
    $\lia \subset i \su(n+1)$ is an abelian subalgebra.
    Then
    \begin{enumerate}
    \item $\mua(X)$ is a convex polytope with vertices in $\mu(X^A)$;
    \item the set $\{x\in X:\, \mua( \overline{A\cdot x})=\mua (X)\}$
      is open and dense;
    \item for any face $\sigma \subset\mua(X)$, there is an $A$-orbit
      $Y$ such that $\mua (\overline{Y} )=\sigma$.
    \end{enumerate}
  \end{teo}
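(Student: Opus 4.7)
The plan is to combine the orbit convexity theorem (Theorem \ref{conv-orbite}) with real analyticity of $X$, reducing the global convexity of $\mua(X)$ to the claim that a generic $A$-orbit closure already realizes the entire image. The key observation is that for every $x \in X$ the set $P(x) := \mua(\overline{A \cdot x})$ is a convex polytope by Theorem \ref{conv-orbite}, so $\mua(X) = \bigcup_{x \in X} P(x)$ is a union of convex polytopes, and it suffices to produce a single $x$ whose polytope $P(x)$ is the whole image.

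First I would exploit real analyticity to produce a distinguished open dense stratum. Set $d := \max_{x \in X} \dim P(x)$ and $U := \{x \in X : \dim P(x) = d\}$. Because $X$ and the action map $A \times X \to X$ are real analytic, and $\dim P(x)$ is computable as the generic rank of $d\mua$ along the $A$-orbit through $x$, the locus $U$ is the complement of a proper real analytic subset of $X$, hence open and dense. Next I would argue that $P(x)$ is locally constant on a dense open subset $V \subset U$: the vertices of $P(x)$ arise as values of $\mua$ on fixed-point sets of generic one-parameter subgroups of $A$, so they depend on $x$ through finitely many real analytic functions, and only finitely many combinatorial types of polytope can appear; restricting to the open stratum where the type is constant gives $V$. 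On $V$ the polytope $P(x) = P_0$ is independent of $x$, and since $\mua(X)$ is the union of polytopes all contained in $P_0$ (by maximality) and containing $P_0$ (on $V$), we obtain $\mua(X) = P_0$, proving the convexity assertion in~(1) together with both statements in~(2).

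For the vertex description in~(1), let $v$ be a vertex of $\mua(X)$ and pick $\beta \in \lia$ such that $v$ is the unique maximizer of $\langle \cdot, \beta\rangle$ on $\mua(X)$. Any $x \in \mua^{-1}(v)$ is a critical point of $\mupb : X \to \R$, and for generic $\beta$ the critical set of $\mupb$ coincides with $X^A$, so $v \in \mua(X^A)$. For~(3), given a face $\sigma$, pick $\beta \in \lia$ whose maximum set on $\mua(X)$ equals $\sigma$ and let $X_\sigma := (\mupb)^{-1}(\max_X \mupb)$. Then $X_\sigma$ is compact, $A$-invariant, and real analytic, and applying the argument of the previous paragraph to a suitable component of $X_\sigma$ produces an $A$-orbit $Y \subset X_\sigma$ with $\mua(\overline{Y}) = \sigma$.

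The main obstacle is the step establishing that the combinatorial type of $P(x)$ is constant on a dense open subset of $U$: the pointwise orbit convexity theorem supplies a polytope at each point but does not by itself control how these polytopes fit together. Real analyticity of $X$ must enter essentially here, since the analogous assertion fails for merely smooth actions; the strategy is to combine analytic dependence of the extreme points of $P(x)$ on $x$ with finiteness of possible combinatorial types to force a single generic polytope, much as in the Zariski-irreducibility argument used in the semi-algebraic setting of \cite{heinzner-schuetzdeller}.
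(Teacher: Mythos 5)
Your reduction to producing a single orbit with $P(x) := \mua(\overline{A\cdot x})$ equal to the whole image is the right idea, and it is also how the paper proceeds. But the step you yourself flag as ``the main obstacle'' is a genuine gap, not a technicality, and nothing in the surrounding argument closes it. Since $\mua(X^A)$ is a finite set (finitely many components of $X^A$, with $\mua$ constant on each), $P(x) = \conv\bigl(\mua(\overline{A\cdot x}\cap X^A)\bigr)$ takes only finitely many values; that much is true. But a map into a finite set need not be locally constant anywhere, and you give no reason why the level set $\{x : P(x) = P_0\}$ should have nonempty interior, let alone be dense --- ``analytic dependence of the extreme points'' is not a usable notion here precisely because the extreme points range over a finite set and jump discontinuously. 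The dimension stratification ($U = \{x : \dim P(x) = d\}$ open and dense) is fine but does not help: two orbits can carry polytopes of the same maximal dimension with different vertex sets. Relatedly, ``contained in $P_0$ by maximality'' is a non sequitur (maximal dimension does not give containment); it can be repaired by density of $V$ plus continuity of $\mua$, but only after $V$ has been produced, which is the missing step.

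What the paper does instead is supply a mechanism forcing a generic orbit closure to reach every vertex: for each vertex $\xi_i$ of $P = \conv(\mua(X^A))$ choose $\beta_i$ with $\langle\cdot,\beta_i\rangle$ maximized on $P$ exactly at $\xi_i$, and show that the set $W^{\beta_i}_{\max}(X)$ of points flowing under $\exp(t\beta_i)$ to the top critical level of $\mua^{\beta_i}$ is open and dense in $X$; a point $x$ in the (finite) intersection of these sets satisfies $\xi_i \in P(x)$ for all $i$, hence $P(x) = P$ by convexity of $P(x)$ (this is Proposition \ref{caballo}). Real analyticity enters only in proving the density of $W^{\beta}_{\max}(X)$ (Lemma \ref{denso}): on $\PC$ the union of the non-top unstable manifolds of the flow of $\beta$ is a linear subspace $L_{j-1}$, and $X \cap L_{j-1}$ cannot contain an open subset of the connected real analytic $X$ without equaling all of $X$, by the identity principle. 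This dynamical and analytic input is what your proposal lacks; without it, or a substitute such as the semi-algebraic machinery of \cite{heinzner-schuetzdeller}, the claim that $P(x)$ is generically the full polytope remains unproven. Your treatments of the vertex statement and of (c) also lean on the claim that for generic $\beta$ one has $\Crit(\mua^\beta|_X) = X^A$, which is unnecessary (the vertex statement is immediate from Theorem \ref{conv2}, since $P(x)$ is always the convex hull of a subset of $\mua(X^A)$) and not obviously true for a noncompact abelian $A$; for (c) the paper instead flows a generic point into $\mua^{-1}(\sigma)$ and identifies the resulting orbit closure.
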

  This result is weaker than the one obtained by Heinzner and
  Sch\"utzdeller \cite{heinzner-schuetzdeller} (even in the abelian
  case). Nevertheless the proof in this note is very simple, and $(b)$
  and $(c)$ are new.  So we think that this might be of some interest.

  In the last section we apply the result of \S
  \ref{sec:abelian-convexity} to the action of $G$ on the set of
  probability measures on $X$ (with the notations of \ref{abst}).
  This yields a simpler and more natural proof of the convexity
  theorem for measures obtained in \cite{bilio-raffero}.

\medskip

{\bfseries \noindent{Acknowledgements}}.  The authors wish to thank
Peter Heinzner for many important discussions and explanations related
to the subject of this paper. They also would like to thank Mich\`ele
Vergne for pointing out reference \cite{kacpe}. Finally they are
grateful to the anonymous referee for a very carefully  reading of the
manuscript.

\section{Abstract Abelian convexity}
\label{sec:abelian-convexity}

The following Proposition contains the key idea and it is basic to the
whole paper.  Let $\spaz, G, K, \liep, \Psi$ and $\fun$ be as in
\ref{abst}.  Let $\lia \subset \liep$ be an abelian subalgebra. Then
$A := \exp (\lia) \subset G$ is a compatible abelian subgroup.

\begin{prop}\label{convex}
  Let $\Psi:\spaz \times A \ra \R$ be a Kempf-Ness function for
  $(\spaz, A,\{e\})$ and let $\fun:\spaz \ra \lia$ be the
  corresponding gradient map. Let $x\in \spaz$ and let
  $A_x=\exp(\lia_x)$ be the stabilizer of $x$.  Let
  $\pi:\lia \ra \lia_x^\perp$ be the orthogonal projection. Then
  $\pi (\fun(A\cd x))$ is an open convex subset of
  $\lia_x^\perp$. Moreover, $\fun(A\cd x) $ is an open convex subset
  of $\fun(x) + \lia_x^\perp$.
\end{prop}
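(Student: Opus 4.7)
The plan is to transfer the proposition into a statement about the gradient image of a smooth convex function on the Euclidean space $\lia_x^\perp$. First I would introduce $\psi_x : \lia \to \R$ by $\psi_x(\xi) := \Psi(x, \exp \xi)$. Since $A$ is abelian, $\exp(\xi + \eta) = \exp(\xi) \exp(\eta)$, and the cocycle condition (P4) takes the form
\begin{equation*}
\psi_x(\xi + \eta) = \psi_x(\xi) + \psi_{\exp(\xi) \cd x}(\eta).
\end{equation*}
Applying (P3) at every base point $\exp(\xi) \cd x$ shows that $\psi_x$ is convex along every affine line of $\lia$, hence convex on $\lia$. Definition (\ref{momento-astratto}) translates into $\nabla \psi_x(\xi) = \fun(\exp(\xi) \cd x)$, so the proposition becomes a statement about the image of the gradient of $\psi_x$.

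Next I would isolate the $\lia_x$-direction, using the fact that $A$ being abelian gives $A_{\exp(\xi) \cd x} = A_x$ for every $\xi$. For $\eta \in \lia_x$ the element $\exp\eta$ fixes $x$, so the cocycle with $g = \exp\eta$ and $h = \exp\xi$ collapses to $\psi_x(\xi + \eta) = \psi_x(\xi) + \psi_x(\eta)$. By continuity, $\psi_x|_{\lia_x}$ is therefore linear, equal to $\sx \fun(x), \cd \xs|_{\lia_x}$, which forces $\nabla \psi_x$ to have constant $\lia_x$-component along the orbit. In particular $\fun(A \cd x) \subset \fun(x) + \lia_x^\perp$. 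The restriction $\tilde\psi_x := \psi_x|_{\lia_x^\perp}$ is smooth and convex, and the equality clause of (P3) applied at every point of the orbit, together with $\lia_{\exp(\xi) \cd x} = \lia_x$, yields a positive definite Hessian at every point of $\lia_x^\perp$. Since $A \cd x = \exp(\lia_x^\perp) \cd x$, one obtains $\pi(\fun(A \cd x)) = \nabla \tilde\psi_x(\lia_x^\perp)$.

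The proposition now reduces to the classical fact that the gradient image of a smooth function on $\R^n$ with positive definite Hessian at every point is an open convex subset of $\R^n$. Openness is immediate from the inverse function theorem. For convexity I would invoke the Legendre--Fenchel transform $\tilde\psi_x^*$, whose effective domain is convex and whose interior coincides with $\nabla\tilde\psi_x(\lia_x^\perp)$. The second assertion of the proposition then follows because $\fun(A \cd x)$ is the translate of the open convex set $\pi(\fun(A \cd x))$ by the (constant) $\lia_x$-component of $\fun(x)$, and this translate lies inside the affine subspace $\fun(x) + \lia_x^\perp$.

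The only non-formal step is the convexity of the gradient image. As an alternative to the Legendre transform one may argue by a continuity method: the set $S = \{t \in [0,1] : (1-t) p_1 + t p_2 \in \nabla\tilde\psi_x(\lia_x^\perp)\}$ is open by IFT and contains the endpoints, and the main difficulty is closedness, which requires a compactness estimate preventing the minimizers of $\tilde\psi_x - \sx p_t, \cd \xs$ from escaping to infinity as $t$ approaches a boundary point of $S$. This compactness is the genuine substance of the argument, and it is precisely what the Legendre--Fenchel formalism encodes via the convexity of the effective domain of $\tilde\psi_x^*$.
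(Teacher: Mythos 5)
Your proposal is correct and follows essentially the same route as the paper: both reduce the statement, via the cocycle condition and (P3), to the strict convexity of $v \mapsto \Psi(x,\exp v)$ on $\lia_x^\perp$, and then invoke the fact that the gradient image of a smooth strictly convex function on a Euclidean space is an open convex set. The only differences are cosmetic: you obtain the containment $\fun(A\cd x)\subset \fun(x)+\lia_x^\perp$ from additivity of $\psi_x$ in the $\lia_x$-directions rather than by the paper's direct computation of $\sx \fun(\exp(v)\cd x), w\xs$ for $w\in\lia_x$, and you sketch a proof of the convex-analysis lemma via the Legendre--Fenchel transform where the paper simply cites Guillemin's book.
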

\begin{proof}
  Set $\lieb:=\lia_x^\perp$ and consider the function
  $f:\lieb \ra \R, f(v)=\Psi(x,\exp(v))$.  Fix $v ,w\in \lieb$ with
  $w \neq 0$ and consider the curve $\gamma(t)=v+tw$.  Set
  $u(t)=f(\gamma(t))$. We claim that $u''(0) >0$. Using the fact that
  $A$ is abelian, the cocycle condition yields
  \begin{equation*}
    \begin{split}
      u(t)&=\Psi(x,\exp(v+tw))=\Psi(x,\exp(tw)\exp(v))\\
      &=\Psi(\exp(v)x,\exp(tw))+\Psi(x,\exp(v)),
    \end{split}
  \end{equation*}
  so
$$
u'(t)=\frac{\mathrm d}{\mathrm{dt}} \Psi(\exp(v)x,\exp(tw)).
$$
By \ref{P3} we have $u''(0)\geq 0$ and the equality would imply that
{$w \in \lia_{\exp(v)x} = \lia_x$}, which is impossible since
$w \in \lia_x^\perp$.  This proves the claim and shows that $f$ is a
strictly convex function on $\lieb$.  Therefore, by basic result in
convex analysis \cite[p.122]{gt}, $ \mathrm d f (\lieb )$ is an open
convex subset of $\lieb\cong (\lieb)^*$ Moreover the computation above
also shows that
\begin{equation}
  \label{zz1}
  (\mathrm d f)_v (w)= \langle \fun (\exp(v)x),w \rangle=\langle \pi(\fun (\exp(v)x)),w \rangle.
\end{equation}
Using the fact that $A x = \exp(\lieb) x$ we conclude that
\begin{equation*}
  \pi(\fun (A x))=\pi(\fun (\exp(\lieb) x))=\mathrm d f (\lieb )
\end{equation*}
is an open convex set of $\lieb$.  This proves the first assertion.
To prove the last assertion it is enough to check that for any
$v\in \lia $ and for any $w\in \lia_x$
\begin{gather}
  \label{eq:zz1}
  \sx \fun(\exp(v)\cd x), w \xs = \sx \fun (x), w\xs.
\end{gather}
Using \eqref{momento-astratto} and the cocycle condition we have
\begin{gather*}
  \sx \fun(\exp(v)\cd x), w \xs =
  \desudtzero \Psi (\exp(v)\cd x, \exp(tw)) = \\
  =\desudtzero \Psi ( x, \exp(tw)\exp(v) ).
\end{gather*}
Using that $v$ and $w$ commute and again the cocycle condition and
\eqref{momento-astratto} we get
\begin{gather*}
  \sx \fun(\exp(v)\cd x), w \xs =
  \desudtzero \Psi ( x, \exp(v)\exp(tw) )  = \\
  = \desudtzero \biggl ( \Psi ( \exp(tw)\cd x, \exp(v) ) + \Psi( x,
  \exp(tw)) \biggr )
  =  \\
  = \desudtzero \Psi ( x, \exp(v) )+ \sx \fun(x), w\xs = \sx \fun(x),
  w\xs.
\end{gather*}
This proves \eqref{eq:zz1}.
\end{proof}
\begin{cor}
Let $x\in \spaz$ be such that $A_x=\{e\}$. Then $\fun(A\cd x)$ is an
  open convex set of $\lia$.
\end{cor}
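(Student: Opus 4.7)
The plan is to observe that this is an immediate specialization of Proposition \ref{convex} to the case of trivial stabilizer. No new ideas are required; the entire content is a translation of the notation.

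First, I would note that the hypothesis $A_x = \{e\}$ forces $\lia_x = \{0\}$. Indeed, $\lia_x$ is the Lie algebra of $A_x$, and since $A_x$ is trivial (in particular discrete), its Lie algebra vanishes. Equivalently, if $\xi \in \lia_x$ then $\exp(t\xi) \in A_x = \{e\}$ for all $t \in \R$, forcing $\xi = 0$.

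Next, since $\lia_x = \{0\}$, the orthogonal complement $\lia_x^\perp$ coincides with all of $\lia$, and the orthogonal projection $\pi : \lia \to \lia_x^\perp$ of Proposition \ref{convex} is the identity map. Applying the second assertion of Proposition \ref{convex}, we conclude that $\fun(A \cdot x)$ is an open convex subset of the affine subspace $\fun(x) + \lia_x^\perp = \fun(x) + \lia = \lia$. Since this affine subspace is all of $\lia$, openness and convexity in $\lia$ is exactly what we want.

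There is no real obstacle here, as the corollary is essentially just a restatement of the main proposition under the simplification $\lia_x = \{0\}$. The only point that is worth spelling out explicitly is the implication $A_x = \{e\} \Rightarrow \lia_x = \{0\}$, which ensures that the projection $\pi$ becomes trivial. All the work has already been done in establishing the proposition itself.
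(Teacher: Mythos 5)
Your proposal is correct and matches the paper's intent exactly: the corollary is stated without proof as an immediate consequence of Proposition \ref{convex}, and your specialization ($A_x=\{e\}$ gives $\lia_x=\{0\}$, hence $\lia_x^\perp=\lia$ and $\fun(x)+\lia=\lia$) is precisely the argument being left implicit. No issues.
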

\begin{cor}
   Set $E:=\pi (\fun(\overline{A\cd x})) $.
If
$y \in
 \overline{A\cd
    x}$,
and  $p:=\pi(\fun(y)) \in \partial E$,
 then $\lia_x\subsetneq \lia_y$.
\end{cor}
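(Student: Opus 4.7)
The plan is to reduce everything to a direct application of Proposition~\ref{convex} with base point $y$ in place of $x$, combined with a continuity argument that gives the non-strict inclusion $\lia_x\subseteq\lia_y$ for free.

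First I would establish, with no hypothesis on $p$, that $\lia_x\subseteq\lia_y$. Indeed, since $A$ is abelian, if $\xi\in\lia_x$ and $y=\lim a_n\cdot x$ with $a_n\in A$, then
\begin{gather*}
  \exp(t\xi)\cdot y=\lim \exp(t\xi)a_n\cdot x=\lim a_n \exp(t\xi)\cdot x=\lim a_n\cdot x=y
\end{gather*}
for every $t\in\R$, so $\xi\in\lia_y$. The same reasoning yields that $A\cdot y\subset\overline{A\cdot x}$, a fact I will use below.

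Now I argue by contradiction: suppose $\lia_x=\lia_y$. Then $\lia_x^\perp=\lia_y^\perp$ and the orthogonal projection $\pi:\lia\to\lia_x^\perp$ coincides with the projection $\pi':\lia\to\lia_y^\perp$ appearing in Proposition~\ref{convex} applied to $y$. That proposition gives that $\pi(\fun(A\cdot y))=\pi'(\fun(A\cdot y))$ is an \emph{open} convex subset of $\lia_y^\perp=\lia_x^\perp$, which contains the point $p=\pi(\fun(y))$ (take $a=e$). Since $A\cdot y\subset\overline{A\cdot x}$, we have
\begin{gather*}
  \pi(\fun(A\cdot y))\subset \pi(\fun(\overline{A\cdot x}))=E,
\end{gather*}
and therefore $p$ lies in an open subset of $\lia_x^\perp$ contained in $E$, i.e.\ in the interior of $E$. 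This contradicts $p\in\partial E$, so the inclusion $\lia_x\subseteq\lia_y$ must be strict.

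I do not foresee a real obstacle: the only subtle point is making sure that the openness assertion in Proposition~\ref{convex} is openness inside $\lia_y^\perp$, so that the hypothesis $\lia_x=\lia_y$ is exactly what lets us identify the ambient space in which $E$ lives and conclude that $p$ is interior to $E$.
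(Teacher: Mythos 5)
Your proof is correct and follows essentially the same route as the paper: continuity (plus commutativity) gives $\lia_x\subseteq\lia_y$ and $A\cdot y\subset\overline{A\cdot x}$, and then assuming $\lia_x=\lia_y$ one applies Proposition~\ref{convex} at $y$ to exhibit $p$ inside an open subset of $\lia_x^\perp$ contained in $E$, contradicting $p\in\partial E$. The only difference is that you spell out the verification of $A_x\subset A_y$ explicitly, which the paper leaves to the reader.
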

\begin{proof}
  Since the $A$-action
  on $\spaz$
  is continuous, it follows $A_x\subset
  A_y$ and so $\lia_y^\perp \subset
  \lia_x^\perp$. Assume by contradiction that
  $\lia_x=\lia_y$
  and let $\pi:\lia
  \lra \lia_x^\perp$ be the orthogonal projection on
  $\lia_x^\perp$.
  By Proposition \ref{convex} the set $\Omega:=\pi(\fun
    (A\cd y))$ is an open convex subset of $\lia_x^\perp$. Since $A\cd
    y \subset \overline{A\cd x}$, we have $p\in \Omega \subset
    E$.   But this contradicts the fact that $p\in \partial
  E$. Thus $\lia_x\subsetneq \lia_y$.
\end{proof}
\begin{teo}\label{conv2}
  If $\overline{A\cd x} $ is compact, then
  \begin{gather*}
    \overline{\fun(A\cd x)} = \fun(\overline{A\cd
      x})=\mathrm{conv}\big(\fun(\overline{A\cd x}\cap \spaz^A)\big).
  \end{gather*}
\end{teo}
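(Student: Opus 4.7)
The plan is to establish the two equalities in turn. The first, $\overline{\fun(A\cd x)}=\fun(\overline{A\cd x})$, is a soft topological statement involving only continuity and compactness; the second will follow by induction on $n:=\dim \lia_x^\perp$, with the preceding Corollary providing the mechanism that feeds the induction.

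For the first equality, continuity of $\fun$ together with compactness of $\overline{A\cd x}$ show that $\fun(\overline{A\cd x})$ is closed, giving $\overline{\fun(A\cd x)}\subseteq \fun(\overline{A\cd x})$; conversely, any $y=\lim a_n\cd x\in \overline{A\cd x}$ satisfies $\fun(y)=\lim \fun(a_n\cd x)\in\overline{\fun(A\cd x)}$. As a byproduct, $F:=\fun(\overline{A\cd x})$ is the closure of the open convex set $\fun(A\cd x)\subseteq \fun(x)+\lia_x^\perp$ furnished by Proposition \ref{convex}, so $F$ is a compact convex subset of the $n$-dimensional affine subspace $\fun(x)+\lia_x^\perp$. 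The easy inclusion $\conv(\fun(\overline{A\cd x}\cap \spaz^A))\subseteq F$ is then immediate.

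For the reverse inclusion I would argue by induction on $n$. If $n=0$ then $A_x=A$, so $\overline{A\cd x}=\{x\}\subseteq \spaz^A$ and the statement is trivial. For $n>0$, the Krein--Milman theorem (or, since $F$ is finite-dimensional, Minkowski) ensures that every point of $F$ is a convex combination of extreme points, and each extreme point lies on the relative boundary $\partial F$. Given an extreme point $e=\fun(y)$ with $y\in \overline{A\cd x}$, the orthogonal projection $\pi:\lia\to \lia_x^\perp$ acts as a translation on $\fun(x)+\lia_x^\perp$ and hence sends $e$ to a point of $\partial E$, where $E:=\pi(F)$. The preceding Corollary then forces $\lia_x\subsetneq \lia_y$, so $\dim \lia_y^\perp<n$. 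Applying the inductive hypothesis to $y$ (noting that $\overline{A\cd y}\subseteq \overline{A\cd x}$ is compact) yields
\begin{equation*}
e=\fun(y)\in \conv\bigl(\fun(\overline{A\cd y}\cap \spaz^A)\bigr)\subseteq \conv\bigl(\fun(\overline{A\cd x}\cap \spaz^A)\bigr),
\end{equation*}
so every point of $F$ lies in $\conv(\fun(\overline{A\cd x}\cap \spaz^A))$, closing the induction.

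The main obstacle is the identification of extreme (equivalently, relative-boundary) points of $F$ with points of $\overline{A\cd x}$ whose stabilizer strictly contains $\lia_x$; this is exactly what the preceding Corollary delivers, once one observes that $\pi$ restricts to a homeomorphism $\fun(x)+\lia_x^\perp\to \lia_x^\perp$ so that relative boundaries correspond under $\pi$. Once this bridge is in place the induction terminates after at most $n$ steps and the remaining bookkeeping is routine.
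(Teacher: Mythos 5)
Your proof is correct, and its skeleton (compactness plus continuity for the first equality, then Minkowski/Krein--Milman reducing everything to the extreme points of $F=\fun(\overline{A\cd x})$) matches the paper's. Where you diverge is in how an extreme point $e=\fun(y)$ is shown to come from an $A$-fixed point: you pass through the Corollary, which only gives the strict inclusion $\lia_x\subsetneq\lia_y$, and therefore you need an induction on $\dim\lia_x^\perp$ to drive the stabilizer all the way up to $\lia$. The paper instead applies Proposition \ref{convex} once more directly at $y$: the set $\fun(A\cd y)$ is an open convex subset of $\fun(y)+\lia_y^\perp$ of dimension $\dim\lia_y^\perp$, it contains $e$ and sits inside the compact convex set $F$, so extremality of $e$ forces that dimension to be $0$ in a single stroke, whence $\lia_y=\lia$ and $y\in\spaz^A$ with no induction at all. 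Both arguments are sound; yours is longer but makes the role of the Corollary explicit, while the paper's observation that extremality annihilates the whole affine direction $\lia_y^\perp$ at once is the shortcut worth internalizing. (Your bookkeeping is fine: for $n>0$ the set $F$ has nonempty interior in $\fun(x)+\lia_x^\perp$, so extreme points do lie on the relative boundary, and $\pi$ restricted to that affine subspace is indeed a translation, so the hypotheses of the Corollary are met.)
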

\begin{proof}
  Since $\overline{A\cd
    x }$ is compact $\fun (\overline{A\cd x} ) = \overline { \fun
    (A\cd x)}$.  By Proposition \ref{convex} $E:=\fun(A\cd
  x)$ is an open convex subset of the affine subspace
  $L:=\fun(x)+\lia_x^\perp$,
  while $\bar{E}=
  \fun (\overline{A\cd x})$ is a compact convex subset.  Let $p\in
  \bar{E}$ be an extreme point and let $y\in \overline{A\cd
    x}$ be such that
  $\fun(y)=x$.
  Again by Proposition \ref{convex} $\fun(A\cd
  y)$ is a convex subset of dimension equal to $\dim
  \lia_y^\perp$.  Since
  $p$
  is an extreme point, this dimension must be $0$,
  so $\lia_y^\perp=\{0\}$
  and $y$
  is a fixed point of $A$.
  So the extremal points of $E$
  are contained in $\fun(
  \overline{A\cd x} \cap \spaz^A)$.  This proves the theorem.
\end{proof}

\section{Application to the gradient map}

In this section we assume that $Z,X,G,K$ be as in \ref{gradient-cl}.
Moreover we assume that $A = \exp (\lia)$, where $\lia \subset \liep$
is an abelian subalgebra.

Applying Theorem \ref{conv2} we get a new proof of the following
result.

\begin{teo}\label{conv-orbite}
  Assume that $X\subset Z$ is an $A$-invariant compact submanifold.
  For any $x\in X$, $\mu_{\lia}(A\cd x)$ is an open convex subset of
  $\mu_{\lia}(x)+\lia_x^\perp$, its closure coincides with
  $\mua(\overline{A\cd x})$, it is a polytope and it is the convex
  hull of $\mup(X^A\cap \overline{A\cd x})$.
\end{teo}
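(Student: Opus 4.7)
The plan is to reduce the statement to the abstract results of Section~\ref{sec:abelian-convexity}. By Proposition~\ref{Kempf-Ness-gradient-peter}, there is a Kempf-Ness function $\Psi:X\times G\to\R$ for $(X,G,K)$ whose gradient map is $\mup$. Its restriction to $X\times A$ is a Kempf-Ness function for $(X,A,\{e\})$; identifying $\lia^*\cong\lia$ via the scalar product, its associated gradient map is the orthogonal projection of $\mup$ onto $\lia$, which is precisely $\mua$.

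Since $X$ is compact and $A$-invariant, $\overline{A\cd x}$ is compact for every $x\in X$. I would apply Proposition~\ref{convex} to see that $\mua(A\cd x)$ is an open convex subset of $\mua(x)+\lia_x^\perp$, and Theorem~\ref{conv2} to obtain
\[
\overline{\mua(A\cd x)}\;=\;\mua(\overline{A\cd x})\;=\;\mathrm{conv}\bigl(\mua(\overline{A\cd x}\cap X^A)\bigr).
\]
This yields every assertion of the theorem except the polytope claim (I read the final $\mup$ in the statement as $\mua$, since otherwise the convex hull would not a priori lie in the affine subspace $\mua(x)+\lia_x^\perp$).

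To upgrade the convex hull to a polytope I would use a feature particular to the gradient map. For each $\beta\in\lia$, the smooth function $\mua^\beta$ has gradient equal to the fundamental vector field $\beta_X$, which vanishes identically on $X^A$, so $\mua$ is locally constant on the connected components of $X^A$; hence it suffices to show that $X^A$ has only finitely many connected components. The main obstacle I foresee is exactly this finiteness. I would establish it by choosing $\beta\in\lia$ generic enough that $X^{\exp(\R\beta)}=X^A$; such a $\beta$ exists because the nontrivial weights of the linearized $A$-action at fixed points form a locally finite set, so the union of their kernels is a proper subset of $\lia$. For such $\beta$, the Morse-Bott property of $\mua^\beta$ on compact $X$ forces $X^A$ to be a smooth compact submanifold, hence with finitely many connected components, so $\mua(X^A)$ is a finite subset of $\lia$ and the convex hull above is a polytope.
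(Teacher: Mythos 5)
Your proposal is correct and follows essentially the same route as the paper: reduce to Proposition~\ref{convex} and Theorem~\ref{conv2} via the Kempf--Ness function of Proposition~\ref{Kempf-Ness-gradient-peter}, then get the polytope claim from the finiteness of $\mua(X^A)$, which the paper asserts in one line (``$X^A$ has finitely many connected components, since $X$ is a compact manifold, and $\mua$ is constant on each of them'') and you justify in more detail via a generic $\beta\in\lia$. Your reading of the final $\mup$ in the statement as $\mua$ also matches what the paper's proof actually establishes.
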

\begin{proof}
  By Proposition \ref{Kempf-Ness-gradient-peter}, there exists a
  Kempf-Ness function $\Psi$ for $(X, G, K)$ satisfying the conditions
  $(P1)-(P5)$ and such that $\fun=\mup$. Now, that
  $\mu_{\lia}(A\cd x)$ is an open convex subset of
  $\mu_{\lia}(x)+\lia_x^\perp$ is proven in Proposition \ref{convex}.
  That
  $\mua (\overline{A\cd x}) = \overline{\mua(A\cd x)} = \conv (\mua
  (\overline{A\cd x}\cap X^A)$
  is proven in Theorem \ref{conv2} (recall that $X$ is compact by
  assumption).  Next observe that $X^A$ has finitely many connected
  components, since $X$ is a compact manifold, and $\mua$ is constant
  on each of them.
\end{proof}

This convexity theorem along the orbits was proven by Atiyah
\cite{atiyah-commuting} in the case, where $X=Z$ and $A$ is a complex
torus.  The general case has been proven by Heinzner and St\"otzel
\cite[Prop. 3]{heinzner-stoetzel}.  The above proof via Theorem
\ref{conv2} is quite short.  Note that the first statement in Theorem
\ref{conv-orbite}, i.e. that $\mu_{\lia}(A\cd x)$ is an open convex
subset of $\mu_{\lia}(x)+\lia_x^\perp$, works even if $X$ is not
compact. We mention that a simple proof of orbit convexity for an
action of a complex torus on a projective manifold can be found in
\cite{kacpe}, see also \cite[p. 44]{fudan}.

Next we turn to the abelian convexity theorem.  Fix an abelian
subalgebra $\lia \subset \liep$ and set $A:= \exp (\lia)$.  Given a
subset $X\subset Z$ and $\beta \in \lia$ set
\begin{gather}
  \label{defW}
  W_{\max}^\beta(X) : = \{ x \in X: \lim_{t \to +\infty } \mua^\beta
  (\exp(t\beta) \cd x ) = \max_X \mua^\beta \}.
\end{gather}

\begin{prop}\label{caballo}
  Assume that $Z$ is compact and let $X \subset Z$ be a closed
  $A$-invariant subset.  Assume that for any $\beta \in \lia $ the set
  $W^\beta_{\max}(X)$ is open and dense in $X$.  Then
  \begin{enumerate}
  \item $P=\mua(X)$ is a convex polytope with vertices in $\mua(X^A)$;
  \item the set $\{x\in X:\, \mua( \overline{A\cdot x})=\mua (X)\}$ is
    dense and it is also open if $X$ is a smooth submanifold of $Z$;
  \item if $\sigma \subset \mua (X)$ is a face of $P$ there exists a
    $A$-orbit $Y$ such that $\mua (\overline{Y} )=\sigma$.
  \end{enumerate}
\end{prop}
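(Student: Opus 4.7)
The plan is to combine the largeness hypothesis on the ascending sets $W^\beta_{\max}(X)$ with the orbit convexity Theorem \ref{conv-orbite} to show that a single generic orbit closure already realizes $\mua(X)$, or a prescribed face.

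For (a) I would find $x_0 \in X$ such that $\max_{\overline{A\cd x_0}} \mua^\beta = \max_X \mua^\beta$ for every $\beta \in \lia$. Choose a countable dense subset $\{\beta_n\}_{n\geq 1} \subset \lia$. By hypothesis each $W^{\beta_n}_{\max}(X)$ is open and dense in the compact Hausdorff space $X$, so Baire category produces a dense $G_\delta$ intersection containing some $x_0$. Monotonicity of the function $t \mapsto \mua^{\beta_n}(\exp(t\beta_n)\cd x_0)$, a direct consequence of the convexity property \ref{P3} via the cocycle manipulation already used in Proposition \ref{convex}, yields $\max_{\overline{A\cd x_0}} \mua^{\beta_n} = \max_X \mua^{\beta_n}$ for every $n$. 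Since support functions of compact subsets of $\lia$ are Lipschitz continuous, this equality extends to every $\beta \in \lia$. The compact convex sets $\mua(\overline{A\cd x_0})$ and $\conv(\mua(X))$ thus have identical support functions and hence coincide. Combined with $\mua(\overline{A\cd x_0}) \subseteq \mua(X) \subseteq \conv(\mua(X))$, this forces $\mua(X) = \mua(\overline{A\cd x_0})$, a convex polytope with vertices in $\mua(X^A)$ by Theorem \ref{conv-orbite}.

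For (b) a finite-intersection version suffices. Let $v_1, \ldots, v_r$ be the vertices of $P$, and for each $i$ choose $\beta_i \in \lia$ such that $v_i$ is the unique maximizer of $\mua^{\beta_i}$ on $P$. Then $\Omega := \bigcap_{i=1}^r W^{\beta_i}_{\max}(X)$ is open and dense. For $x \in \Omega$, each $v_i$ is the $\mua$-image of a subsequential limit of $\exp(t\beta_i)\cd x$, so $v_i \in \mua(\overline{A\cd x})$; this compact convex subset of $P$, containing every vertex, must equal $P$. This gives density of the set in (b). For openness in the smooth case I would prove the reverse inclusion: if $\mua(\overline{A\cd x}) = P$, each $v_i$ has a preimage $y_i \in \overline{A\cd x}$ lying in the max critical set $F_i$ of the Morse--Bott function $\mua^{\beta_i}$. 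The open set $W^{\beta_i}_{\max}(X)$ is $A$-invariant, because $F_i \subseteq X^{\beta_i}$ is $A$-invariant (as $A$ is abelian, it commutes with the $\beta_i$-flow and preserves its zero set) and this invariance propagates along the flow. A net $g_n \cd x \to y_i$ with $g_n \in A$ eventually enters the $A$-invariant open set $W^{\beta_i}_{\max}(X)$, so $x \in W^{\beta_i}_{\max}(X)$ for every $i$, i.e.\ $x \in \Omega$.

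For (c) I would argue by induction on $\dim \sigma$. Vertices are immediate from (a). For $\dim \sigma \geq 1$, let $\beta \in \lia$ be normal to $\sigma$, so $\sigma = \{p \in P : \langle p, \beta \rangle = \max_P \mua^\beta\}$, and set $F := \{x \in X : \mua^\beta(x) = \max_X \mua^\beta\}$. The crucial point, in my view the most elegant consequence of Proposition \ref{convex}, is that $F$ is $A$-invariant: for $x \in F$, the point $\mua(x)$ lies in the relative interior of the open convex set $\mua(A\cd x) \subseteq \mua(x)+\lia_x^\perp$ and attains the maximum of the linear functional $\langle \cdot,\beta\rangle$ on it; a linear functional can attain its maximum at an interior point of an open convex subset of an affine subspace only if it is constant on that subspace, so $\mua^\beta \equiv \max_X \mua^\beta$ throughout $A\cd x$. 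One checks easily that $\mua(F) = \sigma$. Setting $A_H := \exp(\beta^\perp \cap \lia)$, the $A$-action on $F$ factors through $A_H$, and I would apply (a) and (b) to $(F, A_H)$ to produce an $A$-orbit $Y \subseteq F$ with $\mua(\overline Y) = \sigma$. The main obstacle is to verify that the hypothesis of the proposition passes to $(F, A_H)$, i.e.\ that $W^\gamma_{\max}(F)$ is open and dense in $F$ for every $\gamma \in \beta^\perp \cap \lia$. In the smooth or real-analytic applications of interest this transfer is automatic from the corresponding regularity of $F$ inside $X$; in the abstract setting it is the delicate step.
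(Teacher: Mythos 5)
Parts (a) and (b) of your proposal are essentially correct, though they diverge from the paper in instructive ways. For (a) the paper avoids Baire category: it takes only the finitely many directions $\beta_1,\dots,\beta_k$ exposing the vertices of $P=\conv(\mua(X^A))$ and intersects the corresponding sets $W^{\beta_i}_{\max}(X)$; your countable-dense-set plus support-function argument also works, but note that to call the result a \emph{polytope} you must observe (as the paper does) that $\mua(X^A)$ is finite because $Z^A$ has finitely many components on each of which $\mua$ is constant --- you cannot quote Theorem \ref{conv-orbite}, which assumes $X$ is a compact submanifold, whereas here $X$ is only a closed subset, so the correct reference is Theorem \ref{conv2} plus this finiteness. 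For the openness in (b) your route is genuinely different: you deduce $W=\bigcap_i W^{\beta_i}_{\max}(X)$ from the $A$-invariance of each $W^{\beta_i}_{\max}(X)$, while the paper invokes the stratification theorem of Heinzner--Schwarz--St\"otzel (identifying the sets $\Omega_i$ with minimal strata of the norm square of a shifted gradient map), which is why it assumes $X$ smooth. Your argument, if completed, would actually remove the smoothness hypothesis; but the justification of $A$-invariance via ``critical sets propagating along the flow'' should be replaced by something precise: $t\mapsto \mua^\beta(\exp(t\beta)\exp(w)\cd x)$ is the directional derivative along a line of the convex function $v\mapsto\Psi(x,\exp v)$, so its limit as $t\to+\infty$ is the recession slope of that convex function in the direction $\beta$, which is independent of the base point $w$.

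The genuine gap is in (c), and you have located it yourself: reducing to the face set $F=\{x\in X:\ \mua^\beta(x)=\max_X\mua^\beta\}$ and re-running (a)--(b) for $(F,A_H)$ requires knowing that $W^\gamma_{\max}(F)$ is open and dense in $F$ for the relevant $\gamma$, and this is not a consequence of the hypotheses on $X$; the set $F$ can be badly behaved even when $X$ is smooth, and the induction on $\dim\sigma$ supplies no mechanism for verifying it. The paper's proof bypasses the issue entirely: starting from a point $x$ with $\mua(\overline{A\cd x})=P$ lying in $W^\beta_{\max}(X)$, it forms $\phi_\infty(x)=\lim_{t\to+\infty}\exp(t\beta)\cd x$ and shows directly that $\overline{A\cd\phi_\infty(x)}=\overline{A\cd x}\cap\mua^{-1}(\sigma)$; since $\mua$ maps $\overline{A\cd x}$ onto $P$, this intersection has image exactly $\sigma$, so $Y=A\cd\phi_\infty(x)$ is the required orbit. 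Your observation that $F$ is $A$-invariant (via the openness of $\mua(A\cd x)$ in $\mua(x)+\lia_x^\perp$ from Proposition \ref{convex}) is correct and consistent with the paper's argument, but by itself it does not produce the orbit; to repair your (c) you should replace the induction by this flow-to-infinity construction.
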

\begin{proof}
  The set $Z^A$ has finitely many connected components since $Z$ is
  compact, and each component is a smooth submanifold of $Z$. Moreover
  $\mua$ is constant on each component. Therefore $\mua(Z^A)$ is a
  finite set. Since $X^A = X \cap Z^A$, we conclude that also
  $\mua(X^A)$ is a finite set.  Therefore $P:= \conv (\mua(X^A))$ is a
  convex polytope.  By Theorem \ref{conv2} if $x \in X$, then
  $\mua(\overline{A\cd x}) = \conv (\overline{A\cd x} \cap X^A)
  \subset P$.
  Hence $\conv (\mua(X) ) \subset P$. The reverse inclusion is
  obvious, so $P=\conv \mua(X)$.  Now let $\xi_1, \lds, \xi_k$ be the
  vertices of $P$. Choose $\beta_i \in \lia$ such that
  \begin{gather*}
    \{\xi \in P: \sx \xi , \beta_i\xs = \max_P \sx \cd , \beta_i\xs \}
    = \{\xi_i\}.
  \end{gather*}
  By our assumption the set
  $ W^{\beta_1}_{\mathrm{max}}\cap \cdots \cap
  W^{\beta_k}_{\mathrm{max}}$
  is open and dense. Fix
  $x\in W^{\xi_1}_{\mathrm{max}}\cap \cdots \cap
  W^{\xi_k}_{\mathrm{max}}$ and set
  \begin{gather*}
    y_i := \lim_{t \to +\infty} \exp(t\beta_i ) \cd x.
  \end{gather*}
  Then $y_i \in X$ and using \eqref{mup} and \eqref{defW} we get
  \begin{gather*}
    \mua^{\beta_i}(y_i) = \max _X \mua^{\beta_i} = \max_{\mua(X) } \sx
    \cd , \beta_i\xs = \max_{P } \sx \cd , \beta_i\xs.
  \end{gather*}
  Therefore $\mua(y_i) = \xi_i$.  So
  $\xi_i \in \mua(\overline{A\cd x} )$ for any $i=1, \lds, k$. But
  $\mua(\overline{A\cd x})$ is convex by Theorem \ref{conv2}.  Since
  $\mua(\overline{A\cd x}) \subset \mua(X) \subset P$, we get
  \begin{gather*}
    \mua(\overline{A\cd x} ) = \mua (X) = P.
  \end{gather*}
  This proves (a).  Next set
  \begin{gather*}
    W:= \{x\in X:\, \mua( \overline{A\cdot x})=\mua (X)\}.
  \end{gather*}
  We have just proven that $W$ contains
  $ W^{\xi_1}_{\mathrm{max}}\cap \cdots \cap
  W^{\xi_k}_{\mathrm{max}}$,
  so it is dense.  Assume now that $X$ is a smooth submanifold of $Z$.
  Fix one of the vertices of $P$, say $\xi_i$ and consider the set
  \begin{gather*}
    \Omega_i := \{x\in X: \overline{A\cdot x} \cap \mua^{-1}(\xi_i)
    \neq \vacuo \}.
  \end{gather*}
  We claim that this is an open subset of $X$.  This follows from the
  stratification theorem in \cite{heinzner-schwarz-stoetzel}. Indeed
  in the abelian case one can shift the gradient map so we can assume
  that $\xi_i =0 \in \lia$. Then $\Omega_i$ coincides with the stratum
  corresponding to the minimum of $||\mua||^2$, and as such it is
  open.  This proves the claim.  Finally observe that
  $W= \bigcap_{i=1}^k \Omega_i$.  Thus $W$ is also open in $X$ and (b)
  is proved.  Finally let $\sigma \subset P$ be a face of $P$. It is
  an exposed face, so there exists $\beta\in \lia$ such that
  \begin{gather*}
    \sigma=\{\xi\in P: \langle \xi,\beta \rangle = \max_{ \mua(X)}
    \langle \cdot,\beta \rangle \}.
  \end{gather*}
  Hence
  $\mua\meno(\sigma) = \{x \in X: \mup^\beta (x) = \max_X
  \mup^\beta\}$.
  By (b) there is $x\in W^\xi_{\mathrm{max}}$ such that
  $\mua(\overline{A \cdot x})=\mua(X)$. Define
  \begin{equation*}
    \phi_\infty: W^\xi_{\mathrm{max}} \lra \mu^{-1}(\sigma)
    \qquad\phi_\infty ( x ):= \lim_{t\mapsto +\infty} \exp(t\xi)\cdot x.
  \end{equation*}
  Since $\mup^{-1} (\sigma)$ is $A$-stable, it follows that
  $\overline{A \cdot \phi_{\infty} (x)}\subset \overline{A\cdot x}
  \cap \mu^{-1}(\sigma)$.
  On the other hand, let $a_n$ be a sequence of elements of $A$ such
  that
  $a_n \cd x \mapsto \theta \in \overline{A\cdot x} \cap \phi^{-1}
  (\sigma)$.  Since $\phi_\infty (\theta)=\theta$, it follows that
\begin{equation*}
  \theta=\lim_{n\mapsto \infty } \phi_\infty (a_n \cd x)
  =\lim_{n\mapsto \infty} a_n\cd \phi_\infty (x).
\end{equation*}
Therefore
\begin{equation*}
  \overline{A\cdot x} \cap \mu^{-1}(\sigma)=\overline{A \cdot \phi_{\infty} (x)}.
\end{equation*}
Since $\mua\restr{\overline{A\cd x}} : \overline{A\cd x} \ra P$ is a
surjective map,
$\mua ( \overline{A\cd x} \cap \mua\meno(\sigma) ) = \sigma$.  Thus
$\mua(\overline{A \cdot \phi_{\infty} (x)})=\sigma$.
\end{proof}

Let now $T^\C$ be a complex torus acting on the K\"ahler manifold
$Z$. The functions $\mu^\beta :Z \ra \R$ (for $\beta \in \liet$) are
Morse-Bott functions with even indices. Atiyah proved that the set of
their maximum points is a connected critical manifold.  Therefore the
corresponding unstable manifold, which coincides with the set
$\wm^\beta$ is an open dense subset of $Z$.  Set $\lia = i \liet $ and
$A=\exp(i\liet)$. Moreover $T^\C = A\cd T$, $Z^A = Z^T =Z^{T^\C}$
since the action is holomorphic. Finally $\mua = i \mu$ and
$\mu(T^\C \cd x) = -i \mua (A\cd x)$ since the $\mu$ is $T$-invariant.
Therefore the following theorem immediately follows from Proposition
\ref{caballo}.

\begin{teo}\label{main}
  Let $T$ be a compact torus.  Let $(Z,\omega)$ be a compact K\"ahler
  manifold on which $T^\C$ acts holomorphically. Assume that $ T$ acts
  in a Hamiltonian fashion with momentum map $\mu:Z \lra \liet^*$.
  Then there is a $T^\C$-orbit $\mathcal O$ such that
  $\mu(\overline{\mathcal O})=\mu(Z)$. More precisely:
  \begin{enumerate}
  \item the set $\{x\in Z:\, \mu(\overline{T^{\C} \cdot x})= \mu(Z)\}$
    is nonempty, open and dense;
  \item $\mu(Z)$ is a convex polytope with vertices in $\mu(Z^T)$;
  \item if $\,\sigma$ is a face of $\mu(Z )$, then there exists a
    $T^\C$-orbit $Y$ such that $\mu(\overline Y )=\sigma$.
  \end{enumerate}
\end{teo}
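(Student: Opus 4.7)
The plan is to apply Proposition \ref{caballo} with $X=Z$ and the abelian group $A:=\exp(i\liet)$, and then invoke the dictionary that has already been spelled out in the paragraph preceding the theorem, namely $T^\C = T\cdot A$, $Z^A=Z^T=Z^{T^\C}$, and $\mua(A\cd x)$ is just the image of $\mu(T^\C\cd x)$ under multiplication by $-i$ (the $T$-invariance of $\mu$ is what allows us to replace the $T^\C$-orbit by the $A$-orbit). With this dictionary, conclusions (a)--(c) of the theorem are simply the translations of conclusions (a)--(c) of Proposition \ref{caballo}. Part (a) also picks up the openness because $Z$ is trivially a smooth submanifold of itself, which is exactly the extra hypothesis required in Proposition \ref{caballo}(b).

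The only thing left to check, then, is the single hypothesis of Proposition \ref{caballo}: for every $\beta \in \lia=i\liet$, the set $W^\beta_{\max}(Z)$ defined in \eqref{defW} is open and dense in $Z$. I would do this by recalling the classical Morse-Bott input of Atiyah. Write $\beta=i\xi$ with $\xi\in\liet$. Since $Z$ is Kähler and $T$ acts by symplectomorphisms preserving the complex structure, the Kähler gradient of $\mu^\xi$ equals $J\xi_Z = (i\xi)_Z=\beta_Z$, so the upward gradient flow of $\mu^\xi$ is precisely $t\mapsto \exp(t\beta)$. Atiyah's theorem \cite{atiyah-commuting} states that $\mu^\xi$ is Morse-Bott with critical submanifolds of even index and even coindex. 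Hence the maximum set is a connected critical submanifold $M$, and its unstable manifold under the upward gradient flow is open and dense in $Z$. But the unstable manifold of $M$ is exactly $W^\beta_{\max}(Z)$ since $\mua^\beta=\mu^{-i\beta}=\mu^\xi$. This is the desired density and openness.

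Once the hypothesis of Proposition \ref{caballo} is verified, parts (a), (b), (c) follow directly, with the dictionary above converting statements about $A$-orbits and $\mua$ into statements about $T^\C$-orbits and $\mu$. For (c) in particular, given an $A$-orbit $Y$ with $\mua(\overline Y)=\sigma$, the $T^\C$-orbit through any point of $Y$ has closure $T\cdot\overline Y$, on which $\mu$ is constant along the $T$-direction and thus has the same image $\sigma$.

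The main obstacle is really just the Morse-Bott / even-index ingredient from Atiyah's paper; everything else is bookkeeping via the dictionary. There is no new analytic difficulty: the heavy lifting has been done in Proposition \ref{caballo} and in Theorem \ref{conv2}, which rely only on the existence of a Kempf-Ness function and not on holomorphicity, so we only need to translate into the complex-torus setting and feed in the one piece of Kähler information (even indices) that is specific to it.
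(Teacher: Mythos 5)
Your proposal is correct and follows essentially the same route as the paper: the paragraph preceding Theorem \ref{main} sets up exactly the dictionary you describe ($\lia=i\liet$, $A=\exp(i\liet)$, $T^\C=A\cdot T$, $Z^A=Z^T$, $\mu(T^\C\cd x)=-i\mua(A\cd x)$) and verifies the hypothesis of Proposition \ref{caballo} via Atiyah's Morse--Bott/even-index result, so that the unstable manifold of the connected maximum set of $\mu^\beta$ is open and dense. Your write-up just makes explicit a few steps (the identification of the upward gradient flow with $t\mapsto\exp(t\beta)$ and the bookkeeping for part (c)) that the paper leaves implicit.
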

One can apply the method of proof used in Proposition \ref{caballo}
also in the setting considered by Heinzner and Huckleberry in
\cite{hhinv}. In this case $Z$ is a connected K\"ahler manifold, not
necessarily compact, and $X\subset Z$ is a compact irreducible
(complex) analytic subset.
\begin{teo}\label{main2}
  Let $X\subset Z$ be a compact irreducible (complex) analytic subset,
  which is invariant by the $T^\C$-action.  Then
  \begin{enumerate}
  \item $\mu(X)$ is a convex polytope with vertices in $\mu(X^T)$;
  \item the set
    $ W:=\{x\in X:\, \mu(\overline{T^{\C} \cdot x})= \mu(X)\}$ is
    nonempty, open and dense.
  \end{enumerate}
\end{teo}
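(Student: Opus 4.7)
The plan is to follow exactly the strategy used for Theorem \ref{main}: set $\lia=i\liet$ and $A=\exp(\lia)$, so that $T^\C=T\cd A$, $Z^A=Z^T=Z^{T^\C}$, $\mua=i\mu$, and $\mu(T^\C\cd x)=-i\,\mua(A\cd x)$ since $\mu$ is $T$-invariant. Thus $X$ becomes a compact $A$-invariant subset of the ambient (possibly non-compact) K\"ahler manifold $Z$, and Theorem \ref{main2} will follow from Proposition \ref{caballo} once we check (i) that its hypothesis holds for the action of $A$ on $X$, and (ii) that in the present, not necessarily smooth, analytic setting the openness of $W$ still goes through.

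The decisive step is (i): showing that for every $\beta\in\lia$ the set
\[
W^\beta_{\max}(X)=\{x\in X:\lim_{t\to+\infty}\mua^\beta(\exp(t\beta)\cd x)=\max_X\mua^\beta\}
\]
is open and dense in $X$. The function $\mua^\beta=\mu^{-i\beta}$ is smooth on $Z$, and a direct computation using $J(-i\beta)_Z=\beta_Z$ shows that its K\"ahler gradient is the fundamental vector field $\beta_Z$, whose flow is $\exp(t\beta)$ and preserves $X$ by $T^\C$-invariance. Applying to $X$ the analysis of Heinzner and Huckleberry \cite{hhinv}, the critical set of $\mua^\beta\restr{X}$ decomposes into finitely many closed $A$-invariant analytic subsets $C_1,\lds,C_r$, one of which, say $C_1$, coincides with the maximum set $F=\{x\in X:\mua^\beta(x)=\max_X\mua^\beta\}$; moreover the unstable sets
\[
\Omega(C_j)=\{x\in X:\lim_{t\to+\infty}\exp(t\beta)\cd x\in C_j\}
\]
form an $A$-invariant analytic stratification of $X$. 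The stratum $\Omega(C_1)=W^\beta_{\max}(X)$ is open in $X$, and because $X$ is irreducible while $\bigcup_{j\geq 2}\Omega(C_j)$ is a proper analytic subset, $W^\beta_{\max}(X)$ is also dense.

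Once (i) is granted, the combinatorial part of the proof of Proposition \ref{caballo} yields part (a) and the density assertion in (b) verbatim. For the openness of $W$, Proposition \ref{caballo} invoked the stratification theorem of Heinzner-Schwarz-St\"otzel in the \emph{smooth} case; here I would instead use the analytic stratification above applied to the shifted gradient map $\mua-\xi_i$ at each vertex $\xi_i$ of the polytope $P=\mua(X)$. The set $\Omega_i=\{x\in X:\overline{A\cd x}\cap\mua\meno(\xi_i)\neq\vacuo\}$ is then identified with the unstable stratum of the corresponding shifted maximum set, hence open in $X$, and $W=\bigcap_{i=1}^k\Omega_i$ is open as required.

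The main obstacle is really the analytic Morse-Bott input of \cite{hhinv}: one needs openness and density of $W^\beta_{\max}(X)$ for a possibly singular, but irreducible, compact analytic $A$-invariant set. This is the only place where the hypothesis ``$X$ irreducible and analytic'' (rather than merely ``compact and $A$-invariant'') enters the argument; once it is in place, the rest of the proof is a transcription of the arguments in Proposition \ref{caballo} and Theorem \ref{main}.
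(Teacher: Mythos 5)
Your strategy --- verify the hypothesis of Proposition \ref{caballo} for the $A$-action on $X$ and then transcribe its proof --- is not the route the paper takes, and the step you yourself flag as ``the main obstacle'' is precisely the step the paper is built to avoid. The paper never touches the sets $\wm^\beta(X)$ in the analytic case. Instead, for $\xi\in\liet$ it considers directly
\begin{equation*}
W_\xi:=\{x\in X:\ \overline{T^\C\cd x}\cap\mu\meno(\xi)\neq\vacuo\},
\end{equation*}
observes that after shifting this is the set of semistable points, which by \cite{hhinv} is the complement of an analytic subset of $X$, hence (by irreducibility) open and dense whenever nonempty. Taking $\xi_1,\lds,\xi_k$ to be the vertices of $P:=\conv(\mu(X))$ (each lies in the compact set $\mu(X)$, so each $W_{\xi_i}$ is nonempty), any $x$ in the open dense set $W_{\xi_1}\cap\cds\cap W_{\xi_k}$ has $\mu(\overline{T^\C\cd x})$ convex by Theorem \ref{conv-orbite} and containing all vertices, hence equal to $P$; this yields (a) and (b) in one stroke, with $W=W_{\xi_1}\cap\cds\cap W_{\xi_k}$. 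By contrast your route needs two things you do not establish. First, the openness and density of $\wm^\beta(X)$ for a possibly singular irreducible $X$ rests on your assertion that the lower unstable sets $\Omega(C_j)$, $j\geq 2$, form a proper \emph{analytic} subset; this is not a statement of \cite{hhinv} in the form you quote, and although it can be reduced to the same semistability result (the complement of $\wm^\beta(X)$ is the non-semistable locus for the one-parameter subgroup with suitably shifted moment map), that reduction is exactly the missing content, not a citation. Second, Proposition \ref{caballo} as stated assumes $Z$ compact, which is not assumed here; one must replace the finiteness of $\mua(Z^A)$ by the finiteness of $\mu(X^T)$, which holds because $X^T$ is a compact analytic set with finitely many irreducible components. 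So your argument is repairable and ultimately rests on the same input from \cite{hhinv}, but the paper's version invokes that input only at the $k$ vertices and only for the full torus, which is both shorter and sidesteps the one-parameter-subgroup stratification you would still have to justify.
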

\begin{proof}
  We claim that for $\xi\in \liet$ the set
  \begin{equation*}
    W_{\xi}:=\{x\in X: \overline{T^\C\cdot x } \cap \mu^{-1}(\xi) \neq\emptyset \}
  \end{equation*}
  is either empty or open and dense. Indeed by shifting we can assume
  that $\xi = 0$. Hence this is the set of semistable points for the
  action and the claim follows from the results in \cite{hhinv}.  Set
  $P:= \conv (\mu(X))$. This is a polytope with vertices in
  $\mu(X^T)$.  Let $\xi_1, \lds, \xi_k$ be the vertices.  Set
  $W':= W_{\xi_1}\cap \cds \cap W_{\xi_k}$.  This is an open dense
  subset of $X$.  Fix $x \in W'$.  By Theorem \ref{conv-orbite}
  $\mu(\overline{T^\C \cd x}) $ is a convex subset of $P$. Since it
  contains all the vertices we have
  $\mu(\overline{T^\C \cd x}) = \mu(X)=P$.  This proves (b) (which of
  course was proved directly also in \cite{hhinv}).  Moreover we have
  just seen that $W' \subset W$. The opposite inclusion is
  obvious. Hence $W=W'$ and (b) is proved.
\end{proof}

One would like to prove convexity for $\mua(X)$ for $X\subset Z$ a
general $A$-invariant closed submanifold of $Z$.  In this setting
convexity is unknown in general.  Convexity of $\mua(X)$ (and also
\emph{non-abelian} convexity) is known to hold if $X$ is a real flag
manifold, thanks to the pioneering paper \cite{kostant-convexity}, and
more generally if $Z$ is a Hodge manifold and $X$ is an irreducible
semi-algebraic subset of $Z$ whose real algebraic Zariski closure is
irreducible, \cite{bghc,heinzner-schuetzdeller}.

Using Proposition \ref{caballo} we can give a short argument when $X$
is a compact connected real analytic submanifold of $\PC$.  This class
is narrower than the one considered in \cite {heinzner-schuetzdeller},
but it is quite interesting. Above all, we feel that our proof is
rather geometric and very clear in its strategy.

\begin{lemma} \label{denso} Assume that $Z=\PC$ and that $X $ is a
  compact connected $A$-invariant real analytic submanifold endowed
  with the restriction of the Fubini-Study form.  Then for any
  $\beta \in \lia$ the intersection $\wm^\beta(X) \cap X$ is open and
  dense in $X$.
\end{lemma}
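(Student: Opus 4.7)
My plan is to combine an openness argument via lower semicontinuity with a density argument via the Bialynicki-Birula decomposition of $\PC$ and the real-analytic irreducibility of $X$.

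Set $f:=\mua^\beta\restr{X}$ and $g(x):=\lim_{t\to+\infty}f(\exp(t\beta)\cd x)=\sup_{t\geq 0}f(\exp(t\beta)\cd x)$; the two descriptions agree because $\exp(t\beta)\restr{X}$ is the gradient flow of $f$ on $X$ (the gradient of $\mua^\beta$ on $\PC$ is the fundamental vector field of $\beta$, which by $A$-invariance is tangent to $X$), so $f$ is non-decreasing along it. As a supremum of continuous functions $g$ is lower semicontinuous, and since every $\exp(t\beta)$-orbit on $\PC$ converges to a point of $\PC^\beta$, $g$ takes values in the finite set of eigenvalues of $\beta$ on $\C^{n+1}$. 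Setting $M:=\max_X f$ and choosing $\eps$ smaller than the gap from $M$ to the next value of $g$, the set $\wm^\beta(X)=\{g=M\}=\{g>M-\eps\}$ is open.

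For density I would diagonalize $\beta\in i\su(n+1)$ with real eigenvalues $\lambda_1>\cdots>\lambda_r$ on eigenspaces $V_j\subset\C^{n+1}$, giving the Bialynicki-Birula decomposition $\PC=\bigsqcup_j W_j$ with $W_j:=\{[v]:v_j\neq 0,\ v_i=0\ \forall i<j\}$. Each closure $\overline{W_j}=\PP(V_{\geq j})$ is a linear projective subspace, and for $x\in W_j$ one has $g(x)=\lambda_j$. Let $j^*$ be the largest index with $X\subset\PP(V_{\geq j^*})$; by its maximality, $X\cap\PP(V_{\geq j^*+1})$ is a proper closed real-analytic subset of $X$. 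Since $X$ is a connected real-analytic manifold it is irreducible as a real-analytic space: by analytic continuation any non-trivial real-analytic equation on $X$ has nowhere dense zero set, so proper real-analytic subsets of $X$ are nowhere dense. Hence $X\cap W_{j^*}=X\setminus(X\cap\PP(V_{\geq j^*+1}))$ is open and dense in $X$, and $g\equiv \lambda_{j^*}$ on it.

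Lower semicontinuity then forces $g\leq \lambda_{j^*}$ everywhere on $X$, so $M\leq \lambda_{j^*}$; conversely, for any $x\in X\cap W_{j^*}$ the limit $\lim_{t\to+\infty}\exp(t\beta)\cd x$ lies in $X\cap\PP(V_{j^*})$ and realizes $f=\lambda_{j^*}$, giving $\lambda_{j^*}\leq M$. Thus $\lambda_{j^*}=M$ and $X\cap W_{j^*}\subset\wm^\beta(X)$, completing density. The main obstacle is the selection of the stratum $W_{j^*}$ along which the generic orbit flows: this rests on the fact that a connected real-analytic manifold cannot be covered by a proper real-analytic subset, which follows from analytic continuation. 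Modulo this, the argument is a formal interplay between the Bialynicki-Birula stratification on $\PC$ and the lower semicontinuity of $g$.
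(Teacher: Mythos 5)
Your proof is correct and follows essentially the same route as the paper: the same decomposition of $\PC$ into Bialynicki--Birula strata whose closures $\PP(V_{\geq j})$ are linear subspaces (the paper's $L_j$), with density coming from the identity theorem for real-analytic functions on the connected manifold $X$, which forces $X\cap\PP(V_{\geq j^*+1})$ to be nowhere dense. The only cosmetic difference is openness: you argue via lower semicontinuity of $g=\sup_t f\circ\exp(t\beta)$, while the paper observes directly that $\wm^\beta(X)=X\setminus L_{j-1}$ is the complement of a closed set.
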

\begin{proof}
  Since $Z=\PC$, $\beta$ induces a linear flow on $\PC$ which
  restricts to the original one on $Z$ and $X$.  Assume that
  $v \in \su(n+1)$ is the infinitesimal generator of the linear flow
  and let $c_0 < \cds < c_r$ be the critical values of the function
  $f([z]):= i\sx v(z), z\xs / |z|^2$, that is the Hamiltonian of the
  flow on $\PC$.  Denote by $C_i$ the critical manifold corresponding
  to $c_i$ and let $W_i^u(\PC)$ be its unstable manifold. Then
  $\PC = \bigsqcup_{i=0}^r W_i^u$. Moreover for each $j $ the set
  $\bigsqcup_{i\leq j} W_i^u $ is equal to a linear subspace
  $L_j \subset \PC$. This is an elementary computation, see
  e.g. \cite[Lemma 7.4]{bgs}.  Since $\mua^\beta=f\restr{X}$ the
  critical points of $\mua^\beta$ on $X$ are given by
  $\bigcup_i (C_i\cap X)$.  If $\max_X \mua^\beta = c_j$, then
  $X \subset L_j$, $X$ is not contained in $L_{j-1}$ and
  $\wm^\beta(X) = W^u_j (\PC)\cap X = X - L_{j-1}$.  Assume by
  contradiction that this set is not dense in $X$. The $X\cap L_{j-1}$
  contains an open subset of $X$. Then $A:=(X\cap L_{j-1})^0$ is not
  empty. On the other hand $A \neq X$, since $X$ is not contained in
  $L_{j-1}$. Hence there is some point
  $x \in \partial A = \overline{A} - A$. Fix a real analytic chart
  $\phi: U \ra U' $ with $x\in U$ and $U'$ an open ball in
  $\R^k$. Locally around $x$ we have
  $L_{j-1} = \{ h_1 = \cds = h_p = 0\}$ for some local holomorphic
  functions $h_1, \lds, h_p$. Therefore the set
  \begin{gather*}
    U'':= \{ y\in U' : h_1\phi\meno (y) = \cds = h_p\phi\meno (y) =0
    \}
  \end{gather*}
  contains the open set $h ( A \cap U)$. Therefore $U''=U'$,
  $U \subset X \cap L_{j-1}$ and $x \in A$, a contradiction.
\end{proof}

Thanks to the previous lemma we can apply Proposition \ref{caballo}
and we get the following result.

\begin{teo}\label{main-real}
  Assume that $Z=\PC$ with the Fubini study metric.  Let
  $X \subset \PC$ be a compact connected $A$-invariant real analytic
  submanifold.  Then
  \begin{enumerate}
  \item $P=\mua(X)$ is a convex polytope with vertices in $\mu(X^A)$;
  \item the set $\{x\in X:\, \mua( \overline{A\cdot x})=\mua (X)\}$ is
    open and dense;
  \item if $\sigma \subset \mua (X)$ is a face of $P$ there exists a
    $A$-orbit $Y$ such that $\mua (\overline{Y} )=\sigma$.
  \end{enumerate}
\end{teo}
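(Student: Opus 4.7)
The strategy is to deduce Theorem \ref{main-real} as a direct corollary of the abstract convexity result Proposition \ref{caballo}, once its standing hypothesis on the maximum sets $W^\beta_{\max}(X)$ has been verified in the real analytic setting. That verification is exactly the content of Lemma \ref{denso}, which has just been established.

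First I would check that the ambient hypotheses of Proposition \ref{caballo} are satisfied in the present situation: $Z = \PC$ is compact, and $X \subset Z$ is a closed (indeed compact) $A$-invariant subset by assumption. Moreover $X$ is in particular a smooth submanifold of $Z$, since a real analytic submanifold is smooth; this extra regularity is precisely what is needed in order to extract the \emph{open}\,ness part of conclusion (b) in Proposition \ref{caballo}, not merely density.

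Second, for the crucial dynamical hypothesis, I would simply invoke Lemma \ref{denso}: for every $\beta \in \lia$ the set $W^\beta_{\max}(X)$ is open and dense in $X$. This is the real-analytic heart of the matter, and it is the unique point where the hypothesis that $X$ be a real analytic submanifold is used essentially, via an identity-principle style argument that forbids $X \cap L_{j-1}$ from containing a nonempty open subset of $X$ unless $X \subset L_{j-1}$.

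With both hypotheses in place, items (a), (b) and (c) of Theorem \ref{main-real} are immediate translations of the corresponding three items of Proposition \ref{caballo}; no further argument is required. Thus there is no genuine obstacle at this stage: all the substantive work has already been carried out in Lemma \ref{denso} (real-analytic rigidity in $\PC$) and in Proposition \ref{caballo} (the extraction of the polytope, of a dense orbit realising $P$, and of an orbit realising each face), so the proof of Theorem \ref{main-real} amounts to assembling these two results.
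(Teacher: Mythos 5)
Your proposal is correct and follows exactly the paper's route: the paper states Theorem \ref{main-real} immediately after Lemma \ref{denso} with the one-line justification that the lemma supplies the density/openness hypothesis on $\wm^\beta(X)$ required by Proposition \ref{caballo}, which then yields (a), (b), (c) verbatim. Your additional remark that smoothness of $X$ is what secures the openness in part (b) is an accurate reading of the hypotheses of Proposition \ref{caballo}.
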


We remark that by \cite{bghc} the image of the gradient map is
independent of the K\"ahler metric within a fixed K\"ahler class.

\section{Action on the space of  measures}

Let $Z,X,G, K $ be as in \ref{gradient-cl}.  Denote by $\proba(X)$ the
set of Borel probability measures on $X$, which is a compact Hausdorff
space when endowed with the weak topology, see \cite{bgs,bz} for more
details and \cite{biliotti-ghigi-AIF,american,osaka,polar,israel} for background and
motivation.

Assume that $A=\exp(\lia)$ where $\lia \subset \liep$ is an Abelian
subalgebra.  Let $\Psi^A$ be the Kempf-Ness function for $ (X,A,\{e\}) $
as in Proposition \ref{Kempf-Ness-gradient-peter}.  Since $A$ acts on
$X$, we have an action on the probability measures on $X$ as follows:
\begin{gather*}
  A\times \pb \ra \pb , \quad (g, \nu) \mapsto g_*\nu.
\end{gather*}
In \cite{bz} it is proven that this action is continuous with respect
to the weak topology and that the function
\begin{gather}
  \label{defipsim} \PsiM : \proba(M) \times A \ra \R, \quad \PsiM(\nu,
  g) : = \int_M \Psi^A (x, g) d\nu(x),
\end{gather}
is a Kempf-Ness function for $(A,\proba(M),\{e\})$ in the sense of
Definition \ref{def-kn}.  Moreover, the gradient map is given by the
formula
\begin{gather}
  \fun : \proba(M) \ra \lia, \quad \fun (\nu) : = \int_M \mua(x)
  d\nu(x). \label{def-momento-misure}
\end{gather}
Since $\proba(X)$ is compact,
Theorem \ref{conv2} gives  a short  proof of the following result proved in
\cite{bilio-raffero}.
\begin{teo}
  Let $A=\exp(\lia)$ where $\lia \subset \liep$ is an Abelian
  subalgebra. If $\nu \in \proba(M)$, then
  \begin{enumerate}
  \item $\fun (A\cdot \nu)$ is a convex set.
  \item $\fun (\overline{A\cdot \nu})$ coincides with the convex hull
    of $\fun \big(\proba(M)^A \cap \overline{A\cdot \nu} \big)$, where
    $\proba(M)^A=\{\tilde \nu \in \proba (M):\, A\cdot \tilde
    \nu=\tilde \nu \}$.
  \end{enumerate}
\end{teo}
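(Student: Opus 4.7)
The plan is to deduce this theorem as an immediate application of the abstract framework developed in Section \ref{sec:abelian-convexity}, specifically Proposition \ref{convex} for part (a) and Theorem \ref{conv2} for part (b). The whole point of having set things up at the level of Kempf-Ness functions on an abstract topological $G$-space is that the space $\proba(M)$ fits directly into that formalism, so no new convexity arguments are needed.

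First, I would recall that by the construction described around equations (\ref{defipsim}) and (\ref{def-momento-misure}), taken from \cite{bz}, the function $\PsiM(\nu,g) := \int_M \Psi^A(x,g)\, d\nu(x)$ is a Kempf-Ness function for the triple $(\proba(M), A, \{e\})$, and its associated gradient map is exactly $\fun(\nu) = \int_M \mua(x)\, d\nu(x)$. Thus the hypotheses of Proposition \ref{convex} are satisfied with $\spaz = \proba(M)$, $G=A$, $K=\{e\}$, and applying it at the point $\nu$ gives at once that $\fun(A\cdot \nu)$ is an open convex subset of the affine subspace $\fun(\nu) + \lia_\nu^\perp \subset \lia$, where $\lia_\nu \subset \lia$ is the Lie algebra of the stabilizer of $\nu$. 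In particular $\fun(A\cdot\nu)$ is convex, which is part (a).

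For part (b), the remaining ingredient is compactness of the orbit closure. Since $X$ is compact and $\proba(X)$ is equipped with the weak topology, $\proba(X)$ is itself a compact Hausdorff space, so $\overline{A\cdot \nu}$ is compact as a closed subset. Theorem \ref{conv2}, applied with $x = \nu$, then yields
\begin{gather*}
\overline{\fun(A\cdot \nu)} \;=\; \fun(\overline{A\cdot \nu}) \;=\; \conv\!\bigl(\fun(\overline{A\cdot \nu} \cap \proba(M)^A)\bigr),
\end{gather*}
which is exactly the statement of (b).

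There is essentially no obstacle: the only thing that would require care is checking that $\PsiM$ really satisfies (\ref{P1})--(\ref{P5}), but this verification has already been performed in \cite{bz} and is quoted in Section 1. The whole content of the theorem has been absorbed into the abstract convexity results, and the proof reduces to invoking them with the correct choice of Kempf-Ness function.
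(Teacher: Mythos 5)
Your proposal is correct and follows exactly the route the paper intends: it quotes from \cite{bz} that $\PsiM$ is a Kempf--Ness function for $(\proba(M),A,\{e\})$ with gradient map $\fun$, then applies Proposition \ref{convex} for (a) and, using compactness of $\proba(X)$ in the weak topology, Theorem \ref{conv2} for (b). This is precisely the paper's (one-line) argument, just written out in slightly more detail.
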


\end{document}